\theoremstyle{plain} \topmargin 0in
\theoremstyle{plain}
\newtheorem{Thm}{Theorem}
\newtheorem{Prop}[Thm]{Proposition}
\newcommand{\os}{the contact Ozsv\'{a}th-Szab\'{o} invariant }
\newcommand{\bfz}{{\mathbb{Z}}}
\newcommand{\OB}{{\textrm{OB}}}
\def\p{\partial}
\def\v{\vskip.12in}
\def\a{\alpha}
\def\b{\beta}
\begin{document}

\title[]{On the contact Ozsv\'{a}th-Szab\'{o} invariant}

\author{Tolga Etg\"u}

\author{Burak Ozbagci}

\begin{abstract}

Sarkar and Wang proved that the hat version of Heegaard Floer
homology group of a closed oriented $3$-manifold is combinatorial
starting from an arbitrary nice Heegaard diagram and in fact every
closed oriented $3$-manifold admits such a Heegaard diagram.
Plamenevskaya showed that \os is combinatorial once we are given
an open book decomposition compatible with a contact structure.
The idea is to combine the algorithm of Sarkar and Wang with the
recent description of \os due to Honda, Kazez and Mati\'{c}. Here
we simply observe that the hat version of the Heegaard Floer
homology group and \os in this group can be combinatorially
calculated starting from a contact surgery diagram. We give
detailed examples pointing out to some shortcuts in the
computations.
\end{abstract}

\address{Department of Mathematics \\ Ko\c{c} University \\ Istanbul, Turkey}
\email{tetgu@ku.edu.tr} \email{bozbagci@ku.edu.tr}
\subjclass[2000]{57R17, 57R65, 57R58, 57M99}

\keywords{Heegaard Floer homology, Ozsv\'{a}th-Szab\'{o}
invariants, contact structures, open book decomposition }

\thanks{}

\v \v \v

\maketitle

\setcounter{section}{-1}


\section{Introduction}

We know that every closed contact $3$-manifold $(Y,\xi)$ can be
obtained by a contact $\pm 1$ surgery on a Legendrian link in the
standard contact $S^3$ (\cite{dg}).  It is often convenient to
describe $(Y, \xi)$ by a surgery diagram on the plane, i.e., by
the projection of a Legendrian link in the standard contact
$(\mathbb{R}^3, ker(dz+xdy)) $ onto the $yz$-plane with a $\pm
{1}$ surgery coefficient assigned to each component of the link.
Let $s_\xi$ denote the $Spin^c$ structure induced by $\xi$. In
order to calculate the Heegaard Floer homology group $\widehat{HF}
(-Y,s_\xi)$ and in particular to identify \os $c(\xi) \in
\widehat{HF} (-Y,s_\xi)$ we first find a suitable open book
decomposition compatible with $(Y, \xi)$ using the algorithm in
\cite{ao} (see also
\cite{g},\cite{s},\cite{p1},\cite{st},\cite{e},\cite{eo},\cite{a})
and then construct a compatible Heegaard diagram for $-Y$ as in
\cite{hkm} which also includes a description of a certain cycle
descending to $c(\xi)$ in $\widehat{HF} (-Y,s_\xi)$. Next we
convert this Heegaard diagram into a nice Heegaard diagram
(\cite{sw}) applying some finger moves without affecting the
homology class $c(\xi)$ --- no handle slides are necessary
\cite{p}. Finally we calculate $\widehat{HF}
(-Y,s_\xi)$ and $c(\xi) \in \widehat{HF} (-Y,s_\xi)$ by simply
counting certain squares and bigons in this nice Heegaard diagram.
In fact this procedure will allow us to calculate $\widehat{HF}
(-Y) \cong \widehat{HF} (Y) $, not just $\widehat{HF} (-Y,s_\xi)$.

We note that each step of the suggested combination of the above
algorithms can be quite involved and one would like to reduce the
calculations as much as possible by making certain choices. Here
we demonstrate the significance of a particular choice in
simplifying the calculations.

We assume that the reader is familiar with the basics of the Heegaard
Floer theory (see \cite{os1}, \cite{os}). We will work with $\mathbb{Z}_2$ coefficients in our
calculations throughout this paper.


\section{The contact Ozsv\'{a}th-Szab\'{o} invariant is combinatorial}

{\Thm Let $(Y,\xi)$ be a closed contact $3$-manifold described by
a contact surgery diagram on the plane.  We observe that the
$Spin^c$ structure $s_\xi$, Heegaard Floer homology groups
$\widehat{HF} (-Y,s_\xi) \subseteq \widehat{HF}(Y)$ and \os
$c(\xi) \in \widehat{HF} (-Y,s_\xi)$ can be
 calculated combinatorially.}

\begin{proof} Let $(Y,\xi)$ be a closed contact $3$-manifold described by
a contact surgery diagram on the plane, i.e., by the projection of
a Legendrian link in the standard contact $$(\mathbb{R}^3,
ker(dz+xdy)) \subset (S^3, \xi_{st})$$ onto the $yz$-plane with a
$\pm {1}$ surgery coefficient assigned to each component of the
link. First we use the algorithm in \cite{ao} (see also
\cite{g},\cite{s},\cite{p1},\cite{st},\cite{e},\cite{eo},\cite{a})
to find an explicit open book decomposition compatible with $(Y,
\xi)$. The idea in \cite{ao} is to embed the Legendrian surgery
link into the pages of an open book decomposition in $S^3$
compatible with its standard contact structure and then perform
the required contact surgeries to obtain an open book
decomposition $\OB_\xi$ of $Y$ compatible with the resulting
contact structure $\xi$.

Next we briefly recall (\cite{hkm}) how to get a Heegaard diagram
for $-Y$ which also includes a cycle that descends to \os $c(\xi)$
starting from a given open book decomposition  $\OB_\xi$ of $Y$
compatible with $\xi$. The open book decomposition $\OB_\xi$ can
be described as follows: Let $S$ denote the page and let $h : S
\to S$ denote the monodromy of $\OB_\xi$. Then $Y$ is homeomorphic
to $S \times [0,1] / \sim$, where the equivalence relation is
given by
$$ (p,1) \sim (h(p), 0), \;\;\; p\in S$$ $$ (p,t)\sim (p,t'),
\;\;\; p\in \p S; \;t,t' \in [0,1].$$

It is not too hard to see that $Y= H_1 \cup H_2$ is a Heegaard
splitting of $Y$, where $H_1= S\times [0,1/2] / \sim $ and $H_2=
S\times [1/2,1] / \sim$. Let $S_0$ and $S_{1/2}$ denote $S \times
\{0\}$ and $S \times \{1/2\}$ in  $H_1$, respectively. A basis
on a compact surface $S$ with boundary is just a collection
of properly embedded disjoint arcs $\{a_1, \ldots, a_n \}$ on $S$
such that when we cut $S$ along these arcs we get a single
polygon. Now we choose a basis $\{a_1, \ldots, a_n \}$ on the
page $S$ and choose a point $z$ in the polygonal region mentioned
above. Consider the closed surface $\Sigma = S_{1/2} \cup -S_0$
and glue the arc $a_i$ on $S_{1/2}$ with the arc $a_i$ on $-S_0$
to obtain a closed curve $\a_i$ on $\Sigma$. Let $b_i$ be an arc
which is isotopic to $a_i$ by a small isotopy so that the
following hold:

(1) The endpoints of $a_i$ are isotoped along $\p S_{1/2}$, in the
direction given by the orientation of $S_{1/2}$.

(2) The arcs $a_i$ and $b_i$ intersect transversely in one point
$x_i$ in the interior of $S_{1/2}$.

(3) If we orient $a_i$, and $b_i$ is given the induced orientation
from the isotopy, then the sign of the intersection of $a_i$ and
$b_i$ at $x_i$ is $+1$.

Then consider the arc $h(b_i)$ on $-S_0$ and glue the arc $b_i$
and $h(b_i)$ to get a closed curve $\b_i$ on $\Sigma$. If we let
$\a= \{\a_1, \ldots, \a_n\}$ and $\b= \{ \b_1, \ldots, \b_n \}$,
then $(\Sigma, \b,\a, z) $ is a Heegaard diagram for $-Y$, while
$(\Sigma, \a,\b, z) $ is a Heegaard diagram for $Y$. Moreover $X=
\{ x_1, \ldots, x_n \} \in \mathbb{T}_\a \cap \mathbb{T}_\b
\subset Sym^n(\Sigma)$ is a cycle in $ \widehat{CF} ( \Sigma, \b,
\a, z )$ which descends to \os $c(\xi) \in \widehat{HF} (-Y)$,
where $\mathbb{T}_\a = \a_1 \times \cdots \times \a_n$ and
$\mathbb{T}_\b = \b_1 \times \cdots \times \b_n$. Furthermore
there is a map from the set of generators $\mathbb{T}_\a \cap
\mathbb{T}_\b$ of $ \widehat{CF} ( \Sigma, \b, \a, z )$ to the set
of $Spin^c$ structures on $Y$. It turns out (\cite{os}) that the
special cycle $X$ corresponds to the $Spin^c$ structure $s_\xi$
induced by $\xi$. Therefore $c(\xi)$ belongs to $ \widehat{HF}
(-Y,s_\xi) \subseteq \widehat{HF} (-Y)$. We note that
$c_1(\xi)=c_1(s_\xi) \in H^2(Y;\mathbb{Z})$ can be calculated
(\cite{dgs}) combinatorially from a given contact surgery diagram
of $\xi$ (see page 195 in \cite{ozst}).

A connected component of the complement of $\alpha$ and $\beta$
curves in $\Sigma$ is called a region. Now we use the algorithm of
\cite{sw} to convert this Heegaard diagram into a nice  Heegaard
diagram, which we still denote by $( \Sigma, \b, \a, z )$,
so that all the regions on $\Sigma$ not including the base
point $z$ are bigons and squares. In general we would need to
apply finger moves and handle slides of the $\b$ curves in the
Heegaard diagram. Handle slides, fortunately, do not arise in our
case \cite{p} and a finger move corresponds to a certain kind of
isotopy of the $\b$ curves.

Recall that a domain is a formal linear combination of the regions
on $\Sigma$. A domain $D$ is called an empty embedded $2m$-gon, if

(1) $D$ has coefficients $0$ and $1$ everywhere,

(2) $D$ is topologically an embedded disk on $\Sigma$, with $2m$
vertices on its boundary,

(3) There is exactly one region with  coefficient $1$ around each
vertex on the $\p D$,

(4) $D$ does not contain any intersection points of $\a$ and $\b$
curves in its \emph{interior}.

Once we have a nice Heegaard diagram $(\Sigma, \b,\a, z) $, by
\cite{sw}, it is combinatorial to calculate the boundary map of
the Heegaard Floer chain complex. We just make a list of all the
generators and count all the empty embedded bigons and the empty
embedded squares on the Heegaard surface connecting these
generators by examining the diagram. Finally by using simple
linear algebra with $\mathbb{Z}_2$ coefficients we can compute
$\widehat{HF} (-Y)$. Here we emphasize that we can combinatorially
determine all the generators which are mapped to the distinguished
$Spin^c$ structure $s_\xi$, calculate $\widehat{HF} (-Y, s_\xi)$
and identify $c(\xi) \in \widehat{HF} (-Y, s_\xi)$.
\end{proof}


\section{The unique tight contact structure on $S^1 \times S^2$}

Consider the contact $3$-manifold $(Y, \xi)$ described by the
surgery diagram depicted in Figure~\ref{s1s2}. When we convert
this diagram into a smooth diagram (cf. Figure~\ref{sms1s2}) and
blow down the $-1$-curve, we immediately see that the underlying
$3$-manifold $Y$ is nothing but $S^1 \times S^2$. It is well-known
that there exists a
unique tight contact structure on $S^1 \times S^2$ up to isotopy \cite{elias}.

\begin{Prop}
The contact structure $\xi$ is the unique tight contact structure on
$S^1 \times S^2$.
\end{Prop}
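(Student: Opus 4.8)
The plan is to combine a classification input with the surgery description. First I would recall that by Eliashberg's classification (cited here as \cite{elias}), $S^1 \times S^2$ carries exactly one tight contact structure up to isotopy, and that this unique tight structure is precisely the one supported by the open book whose page is an annulus and whose monodromy is the identity. So it suffices to show that the contact structure $\xi$ in Figure~\ref{s1s2} is tight; it will then automatically be isotopic to the standard tight one. I would first verify, as the text already indicates, that blowing down the $-1$-framed unknot in the smooth picture (Figure~\ref{sms1s2}) produces $S^1 \times S^2$, so the underlying manifold is correct and we are genuinely in the setting of Eliashberg's theorem.

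Next I would address tightness directly. One clean route: apply the algorithm of \cite{ao} (as in the proof of the Theorem above) to convert the contact surgery diagram of Figure~\ref{s1s2} into an explicit open book $\OB_\xi$ compatible with $(Y,\xi)$, and check that one obtains (possibly after positive stabilizations, which do not change the contact structure) the annulus-with-identity-monodromy open book, or at least an open book with connected binding and monodromy a product of positive Dehn twists — by the Giroux correspondence and the theorem of Giroux--Honda--Kazez--Mati\'c, a monodromy that is right-veering, and in the genus-zero one-boundary-component reductions a product of positive Dehn twists, guarantees the supported contact structure is tight (indeed Stein fillable). Alternatively, and more in the spirit of this paper, I would compute \os $c(\xi) \in \widehat{HF}(-Y, s_\xi)$ by the combinatorial procedure established in the Theorem and check that $c(\xi) \neq 0$; since a contact structure with nonvanishing Ozsv\'ath--Szab\'o invariant is tight, this settles it. The second approach has the advantage of being self-contained within the methods of the paper, and it doubles as a worked example of the algorithm.

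The main obstacle, I expect, is bookkeeping rather than anything conceptual. In the open-book approach, the difficulty is carrying out the Avdek/\cite{ao}-style embedding of the Legendrian link into pages and then identifying the resulting abstract open book up to stabilization with the standard annular one — stabilizations can obscure the picture, and one must be careful that the contact $+1$ surgery in the diagram is handled correctly (it corresponds to a negative, not positive, Dehn twist on the relevant curve, so one must check that the curve it is performed along is such that the net monodromy is still a product of positive twists or is right-veering). In the Heegaard Floer approach, the obstacle is simply executing the Sarkar--Wang nicing procedure and the square/bigon count accurately for this diagram; the payoff is an explicit nonzero class. Either way, once tightness is in hand, uniqueness is immediate from \cite{elias}, completing the proof.
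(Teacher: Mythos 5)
Your second route---computing $c(\xi)\neq 0$ combinatorially and invoking the fact that nonvanishing of \os implies tightness, then concluding uniqueness from Eliashberg's classification---is exactly the paper's argument: the proof of the Proposition simply defers to the subsequent computation showing $c(\xi)\neq 0$ and cites \cite{os} for tightness. The proposal is correct and takes essentially the same approach.
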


\begin{proof}
Below we show that \os $c(\xi)$ is nontrivial. Therefore by a fundamental result in
\cite{os} $\xi$ is tight.
\end{proof}

{\Rem In particular, the unique tight contact structure on $S^1 \times S^2$
has nontrivial contact Ozsv\'{a}th-Szab\'{o} invariant. This was first proved in \cite{ls}.}

\begin{figure}[ht]
  \relabelbox \small {
  \centerline{\epsfbox{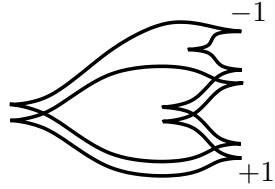}}}
  \relabel{1}{{$-1$}}
  \relabel{2}{{$+1$}}

  \endrelabelbox
        \caption{A contact surgery diagram}
        \label{s1s2}
\end{figure}

\begin{figure}[ht]
  \relabelbox \small {
  \centerline{\epsfbox{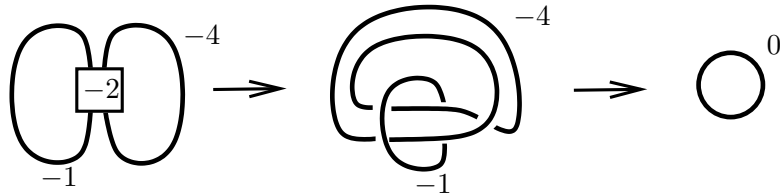}}}
  \relabel{1}{{$-1$}}
  \relabel{2}{{$-4$}}
 \relabel{3}{{$0$}}
 \relabel{4}{{$-2$}}
 \relabel{5}{{$-1$}}
 \relabel{6}{{$-4$}}

  \endrelabelbox
        \caption{Underlying $3$-manifold is $S^1 \times
S^2$}
        \label{sms1s2}
\end{figure}

First we would like to understand the homotopy class of $\xi$
considered as an oriented plane field and determine the $Spin^c$
structure $s_\xi$ induced by $\xi$.  We calculate the first Chern
class of $\xi$ as follows: Let $K_1$ and $K_2$ denote the $\pm
{1}$ surgery curves in Figure~\ref{s1s2}, respectively. Orient
these Legendrian knots and let $\mu_1$ and $\mu_2$ denote the
oriented meridians of $K_1$ and $K_2$, respectively. Then by
\cite{dgs} we have
$$PD(c_1(\xi))= rot(K_1)[\mu_1]+ rot(K_2)[\mu_2] = [\mu_1]+2[\mu_2] = 0 \in H_1(S^1
\times S^2, \mathbb{Z}) $$ where $PD$ denotes the Poincare dual
and $rot(K_i)$ denotes the rotation number of $K_i$. Moreover
since $H_1(S^1 \times S^2; \mathbb{Z})=\mathbb{Z}$ has no
$2$-torsion the $Spin^c$ structure $s_\xi$ is determined by
$c_1(\xi)$. In other words $s_\xi$ is the unique $Spin^c$
structure on $S^1 \times S^2$ whose first Chern class is trivial.

Our goal, however, is  to calculate $\widehat{HF} (-Y, s_\xi)$,
$\widehat{HF} (-Y)$ and in particular \os $c(\xi) \in \widehat{HF}
(-Y,s_\xi)$. By applying the techniques in \cite{e} we can find an
open book decomposition $\OB_\xi$ (see Figure~\ref{opbook})
compatible with $\xi$: First we start with the open book
decomposition $\OB_H$ induced by the positive Hopf link $H$ in
$S^3$, whose page is an annulus. Then we stabilize this open book
decomposition once and embed the +1 surgery curve onto a page.
Next we stabilize one more time and embed the -1 surgery curve
onto a page. Applying the required surgeries we get the desired
open book decomposition. Note that we get exactly the same open
book considered by Plamenevskaya in \cite{p}. By the lantern
relation on the four punctured sphere we know that the monodromy
of $\OB_\xi$ is a product of two right-handed Dehn twists and
hence $\xi$ is Stein fillable \cite{g}. Therefore we know that \os
of $\xi$ is nontrivial \cite{os}. In the following we will verify
this fact by the algorithm described in \cite{p}, but we will
choose three different bases to illustrate that this choice
is in fact crucial in calculations.


\subsection{Basis I}

\begin{figure}[ht]
  \relabelbox \small {
  \centerline{\epsfbox{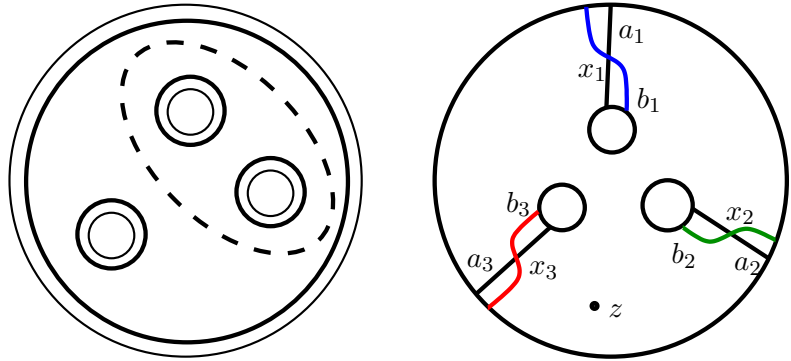}}}
  \relabel{1}{{$a_1$}}
  \relabel{2}{{$a_3$}}
  \relabel{3}{{$a_2$}}
\relabel{4}{{$b_1$}}
  \relabel{5}{{$b_3$}}
  \relabel{6}{{$b_2$}}
\relabel{a}{{$x_1$}}
  \relabel{c}{{$x_3$}}
  \relabel{b}{{$x_2$}}
  \relabel{d}{{$z$}}
  \endrelabelbox
        \caption{Left: Dehn twists about the solid curves are right-handed, while the Dehn twist
        about the dashed curve is left-handed. Right: A basis $\{a_1,a_2,a_3\}$
        on the page $S_{1/2}$, the arcs $\{b_1,b_2, b_3\}$, the intersection points
        $\{x_1, x_2, x_3 \}$, and the base point $z$. }
        \label{cutolga}

\end{figure}

First we take the basis $\{a_1, a_2, a_3\}$ on page $S$ which
is shown on the right in Figure~\ref{cutolga}. This is the basis
that was used in \cite{p}.  We observe that there are two
``bad" regions, one non-disk the other a hexagon. We divide each of
these regions into two square regions by a simple finger move
(\cite{p}) introducing a bigon in the process. The resulting curves are
depicted in Figure~\ref{nonsquare}. After this modification of the
Heegaard diagram there are nine regions which do not contain $z$.
These regions are denoted by $R_1,\ldots ,R_9$ and labelled by their indices in
Figure~\ref{nonsquare}.

\begin{figure}[ht]
  \relabelbox \small {
  \centerline{\epsfbox{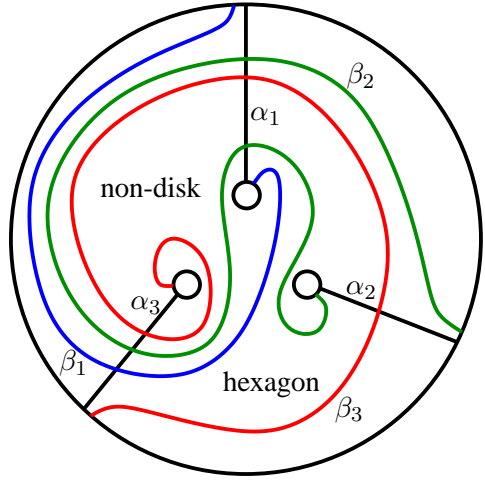}}}
  \relabel{2}{{non-disk}}
  \relabel{1}{{hexagon}}
\relabel{3}{{$\b_3$}}
  \relabel{4}{{$\b_2$}}
  \relabel{5}{{$\b_1$}}
\relabel{a}{{$\a_1$}}
 \relabel{b}{{$\a_2$}}
 \relabel{c}{{$\a_3$}}
  \endrelabelbox
        \caption{Bad regions are indicated on page $S_0$}
        \label{monolga}

\end{figure}

\begin{figure}[ht]
  \relabelbox \small {
  \centerline{\epsfbox{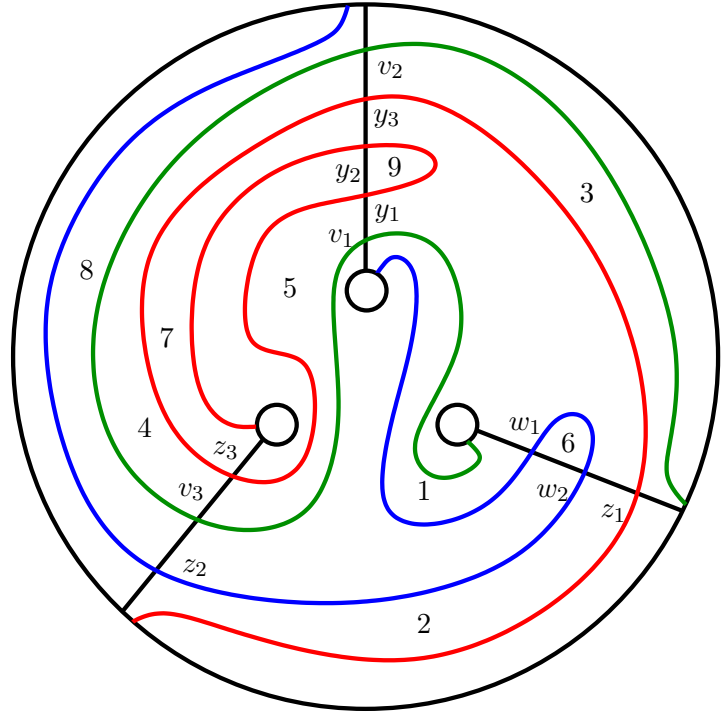}}}
  \relabel{1}{{$1$}}
  \relabel{2}{{$2$}}
  \relabel{3}{{$3$}}
 \relabel{4}{{$4$}}
  \relabel{5}{{$5$}}
  \relabel{6}{{$6$}}
\relabel{7}{{$7$}}
  \relabel{8}{{$8$}}
 \relabel{9}{{$9$}}

  \relabel{a}{{$v_1$}}
 \relabel{b}{{$y_1$}}
 \relabel{c}{{$y_2$}}
  \relabel{d}{{$y_3$}}
  \relabel{e}{{$v_2$}}
\relabel{f}{{$w_1$}}
 \relabel{g}{{$w_2$}}
\relabel{h}{{$z_1$}}
 \relabel{i}{{$z_3$}}
\relabel{j}{{$v_3$}}
 \relabel{k}{{$z_2$}}

  \endrelabelbox
        \caption{Finger moves}
        \label{nonsquare}

\end{figure}

Now by examining the intersections of $\alpha$ and $\beta$ curves
on $\Sigma$ we see that the generators of the Heegaard Floer chain
complex $\widehat{CF}(\Sigma, \b, \a, z) $ are $X=(x_1,x_2,x_3),
A=(x_1,z_1,v_3), B=(x_1,x_2,z_3), C_k=(y_k,x_2,z_2),
D_{ij}=(v_i,w_j,z_3), E_{ij}=(v_i,w_j,x_3),F_i=(v_i,z_1,z_2),
G_{kj}=(y_k,w_j,v_3), $ where $ 1 \leq i,j \leq 2 $ and $1 \leq k
\leq 3$. We calculated all the boundary maps induced by the empty
embedded bigons and empty embedded squares:

$\p X=0$

$\p A=B$ by $R_3+R_4$

$\p B=0$

$\p C_{1}=0$

$\p C_{2}=X+C_1$ by $R_4+R_7+R_8$ and  $R_9$

$\p C_{3}= B$ by $R_4+R_8$

$\p D_{11}=B+G_{11}+D_{12}$ by $R_1$, $R_5$ and $R_6$

$\p D_{12}=G_{12}$ by $R_5$

$\p D_{21}=D_{22}$ by $R_6$

$\p D_{22}=0$

$\p E_{11}=X+E_{12}$ by $R_1$ and $R_6$

$\p E_{12}=0$

$\p E_{21}=E_{22}$ by $R_6$

$\p E_{22}=0$

$\p F_{1}=E_{12}+C_1$ by $R_2$ and $R_3+R_4+R_5$

$\p F_{2}=E_{22}+C_3+A$ by $R_2$, $R_3$ and $R_8$

$\p G_{11}=G_{12}$ by $R_6$

$\p G_{12}=0$

$\p G_{21}=E_{21}+G_{11}+G_{22}$  by $R_4+R_7$, $R_9$ and $R_6$

$\p G_{22}=E_{22}+G_{12}$ by $R_4+R_7$ and $R_9$

$\p G_{31}=D_{21}+G_{32}$ by $R_4$ and $R_6$

$\p G_{32}=D_{22}$ by $R_4$

The generators split into two sets: In the first set we have the
generators $X$, $C_1$, $C_2$, $E_{11}$, $E_{12}$, $F_1$ with the
following boundary maps: $\p X=0$, $\p C_{1}=0$, $\p C_{2}=X+C_1$,
$\p E_{12}=0$, $\p E_{11}=X+E_{12}$, $\p F_{1}=E_{12}+C_1$. Note
that these generators all correspond to the $Spin^c$ structure
$s_\xi$ because we know (\cite{hkm})  that the cycle $X$
corresponds to $s_\xi$, and there are Whitney disks connecting $X$
and $C_2$, $C_2$ and $C_{1}$, $C_1$ and $F_1$, $F_1$ and $E_{12}$,
$E_{12}$ and $E_{11}$. Similarly, there exist Whitney disks connecting the
other 16 generators. Let $V_1$ be the vector space over
$\mathbb{Z}_2$ generated by $X$, $C_1$, $C_2$, $E_{11}$, $E_{12}$,
$F_1$  and let $\p_1 : V_1 \to V_1$ denote the linear map induced
by the boundary maps. Then it is easy to see that rank $\p_1= 2$
and dim $\ker \p_1 =4$. It follows that $\widehat{HF} (-S^1 \times
S^2, s_\xi)=\mathbb{Z}_2 \oplus \mathbb{Z}_2$ which is generated
by $[X]= c(\xi)$ and $[C_2+E_{11}+F_1]$. Hence we conclude that
$c(\xi) \neq 0$.

{\Rem In \cite{p}, Plamenevskaya argues that $\p E_{11}=X+E_{12}$
($d\textbf{x} =  \textbf{c} + \textbf{y}$ in her notation) is
sufficient to show that $[X] \neq 0$. But in fact one has to show
that $E_{12}$ is not a boundary. For a complete argument one has
take into account the boundary relations $\p F_{1}=E_{12}+C_1$ and
$\p C_{2}=X+C_1$.}

To see that the generators in the second set correspond to a different
$Spin^c$ structure $s \neq s_\xi$, consider the loop
$\Gamma$ in the Heegaard surface $\Sigma$ obtained by
concatenating the following paths: part of $\a_1$ from $x_1$ to
$v_2$, part of $\b_2$ from $v_2$ to $x_2$, part of $\a_2$ from
$x_2$ to $z_1$, part of $\b_3$ from $z_1$ to $x_3$, part of $\a_3$
from $x_3$ to $z_2$ and part of $\b_1$ from $z_2$ to $x_1$.
According to \cite{os1}, the difference between the $Spin^c$
structures which correspond to $X=(x_1,x_2,x_3)$ and
$F_2=(v_2,z_1,z_2)$ is measured by the Poincar\'e dual of $p ([\Gamma])$
 in $H^2(S^1 \times
S^2 ; \bfz) $, where $$p : H_1(\Sigma ; \bfz ) \to
\frac{H_1(\Sigma ; \bfz )}{<[\a_1] ,[\a_2] , [\a_3] ,[\b_1],
[\b_2], [\b_3]
>} \cong H_1 (S^1 \times S^2 ; \bfz) \cong \bfz$$ is the quotient
homomorphism. In Figure~\ref{gamma2}, the curve $\Gamma$ is drawn on
the Heegaard surface $\Sigma$ together with $\gamma_i$'s such that
$[\gamma_i]$'s complete $[\a_i]$'s to a basis for the first homology
of $\Sigma$.

\begin{figure}[ht]
  \relabelbox \small {
  \centerline{\epsfbox{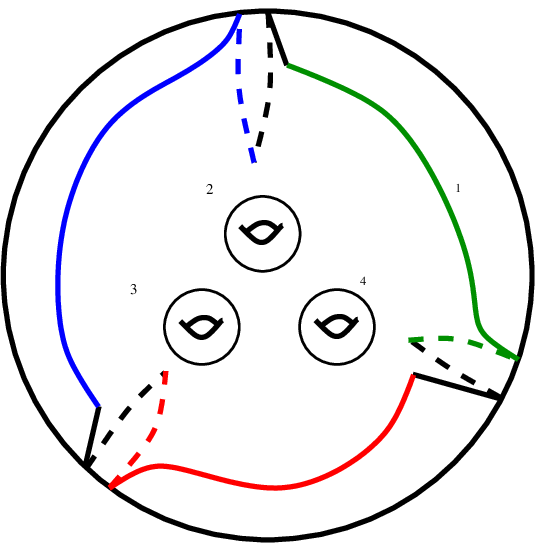}}}
  \relabel{1}{{$\Gamma$}}
  \relabel{3}{{$\gamma_3$}}
  \relabel{2}{{$\gamma_1$}}
 \relabel{4}{{$\gamma_2$}}
\endrelabelbox
        \caption{The curve $\Gamma$ on the Heegaard surface $\Sigma$}
        \label{gamma2}
\end{figure}

\begin{figure}[ht]
  \relabelbox \small {
  \centerline{\epsfbox{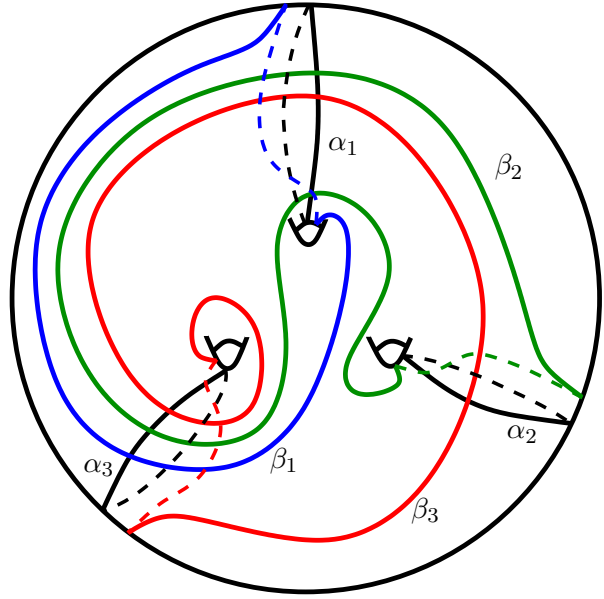}}}
  \relabel{2}{{$\b_1$}}
  \relabel{4}{{$\b_2$}}
  \relabel{6}{{$\b_3$}}
 \relabel{1}{{$\a_1$}}
  \relabel{3}{{$\a_2$}}
  \relabel{5}{{$\a_3$}}

  \endrelabelbox
        \caption{The $\a$ and $\b$ curves on $\Sigma = S_{1/2} \cup
        -S_0$ pictured from the $S_{0}$-side.}
        \label{alphabeta2}
\end{figure}

On one hand $[\Gamma] = [\gamma_1] + [\gamma_2] + [\gamma_3] \in
H_1 (\Sigma ; \bfz)$, where each of these curves is oriented ``clockwise".
On the other hand, the kernel of the
quotient epimorphism $p$ is generated by
$[\gamma_1]+[\gamma_3]$,
$[\gamma_2]+[\gamma_3]$ and $[\alpha_i]$'s (see Figure~\ref{alphabeta2}).
 Therefore $p([\Gamma])$ is $\pm 1 \in \bfz \cong H_1 (S^1 \times S^2 ; \bfz)$,
in particular nonzero. This implies that the generators $X$ and
$F_2$ of the Heegaard Floer chain complex correspond to different
$Spin^c$ structures, i.e. $s \neq s_{\xi}$.

 Let $V_2$ be the vector space generated by
the remaining generators and let $\p_2 : V_2 \to V_2$ denote the
boundary map. One can calculate by simple linear algebra that rank
$\p_2= 8$ and dim $\ker \p_2 =8$. Hence we conclude that the
homology for $(V_2, \p_2)$ is trivial, i.e., $\widehat{HF} (-S^1
\times S^2, s)=0$. Since there are no other generators, the
Heegaard Floer homology groups in the other $Spin^c$ structures
are automatically zero.  Consequently we get
$$\widehat{HF} (-S^1 \times S^2)= \widehat{HF} (-S^1 \times S^2,
s_\xi) \oplus \widehat{HF} (-S^1 \times S^2, s) = \mathbb{Z}_2
\oplus \mathbb{Z}_2,$$ which was indeed proved in \cite{os1}.


\subsection{Basis II}

In the following we choose a different basis on the page $S$
of $\OB_\xi$ and repeat the calculations above. The point is that
with this new choice of basis we will have fewer
generators and fewer relations. We depict the $\alpha$
and $\beta$ curves on page $S_{1/2}$ in Figure~\ref{opbook}.  Now
by examining the intersections of $\alpha$ and $\beta$ curves on
$\Sigma$ we see that there are exactly eight generators of the
Heegaard Floer chain complex: $X=(x_1,x_2,x_3), A=(w_2,y_1,x_3),
B=(x_1,y_2,x_3), C=(x_1,z_1,y_3), D=(w_2,y_3,z_2),
E=(w_1,x_2,z_2), F=(w_1,y_2,z_2), G=(w_1,y_1,z_1) $.

\begin{figure}[ht]
  \relabelbox \small {
  \centerline{\epsfbox{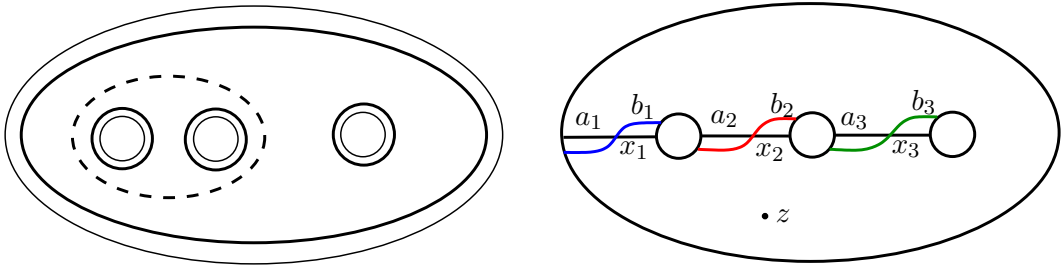}}}
  \relabel{1}{{$a_1$}}
  \relabel{2}{{$a_2$}}
  \relabel{3}{{$a_3$}}
\relabel{4}{{$b_1$}}
  \relabel{5}{{$b_2$}}
  \relabel{6}{{$b_3$}}
\relabel{a}{{$x_1$}}
  \relabel{c}{{$x_3$}}
  \relabel{b}{{$x_2$}}
  \relabel{d}{{$z$}}

  \endrelabelbox
        \caption{Left: Dehn twists about the solid curves are right-handed, while the Dehn twist
        about the dashed curve is left-handed. Right: A basis $\{\a_1,\a_2,\a_3\}$
        on the page $S_{1/2}$, the arcs $\{b_1,b_2, b_3\}$, the intersection points
        $\{x_1, x_2, x_3 \}$, and the base point $z$. }
        \label{opbook}

\end{figure}

\begin{figure}[ht]
  \relabelbox \small {
  \centerline{\epsfbox{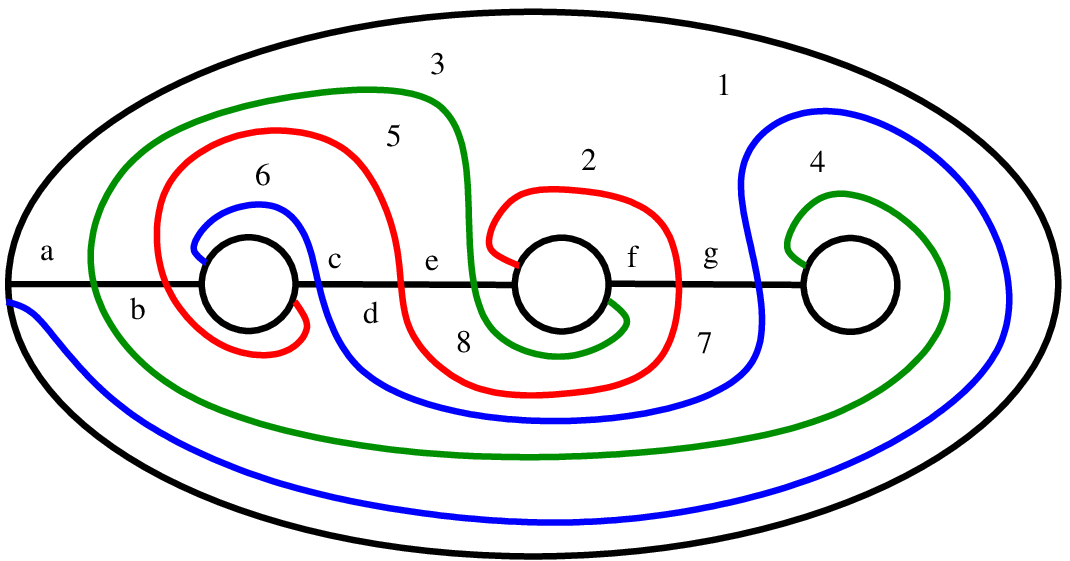}}}
  \relabel{1}{{$\b_1$}}
  \relabel{2}{{$\b_2$}}
  \relabel{3}{{$\b_3$}}
 \relabel{4}{{$1$}}
  \relabel{5}{{$2$}}
  \relabel{6}{{$3$}}
\relabel{7}{{$4$}}
  \relabel{8}{{$5$}}
  \relabel{a}{{$w_1$}}
 \relabel{b}{{$w_2$}}
 \relabel{c}{{$y_1$}}
  \relabel{d}{{$y_2$}}
  \relabel{e}{{$y_3$}}
\relabel{f}{{$z_1$}}
 \relabel{g}{{$z_2$}}

  \endrelabelbox
        \caption{The $\a$ and $\b$ curves on page $S_0$}
        \label{mono}
\end{figure}

There are five regions which do not contain $z$. These are denoted by
$R_1, \ldots, R_5$ and labelled by their indices in Figure~\ref{mono}. Note that
all of the regions are already squares. So we do not need to apply
any finger moves. Below we list the boundary maps induced by these
squares:

$\p X=0$

$\p A=B$ by $R_3$

$\p B=0$

$\p C=B$ by $R_5$

$\p D=A+C$ by $R_3+R_4$ and $R_4+R_5$

$\p E=X+X=0$ by $R_1$ and $R_2+R_3+R_4+R_5$

$\p F=B+D+G+B= D+G$ by $R_1$, $R_2$, $R_4$ and $R_2+R_3+R_4+R_5$

$\p G=A+C$ by $R_2+R_3$ and $R_2+R_5$

The chain complex naturally splits with respect to the $Spin^c$
structures. The generators $X$ and $E$ correspond to the $Spin^c$
structure $s_\xi$ which is uniquely determined by $c_1(s_\xi)=
c_1(\xi)=0$. The other generators correspond to a different
$Spin^c$ structure $s \neq s_\xi$. To see this consider the loop
$\Gamma$ in the Heegaard surface $\Sigma$ obtained by
concatenating the following paths: part of $\a_1$ from $x_1$ to
$w_1$, part of $\b_3$ from $w_1$ to $y_3$, part of $\a_2$ from
$y_3$ to $x_2$, part of $\b_2$ from $x_2$ to $z_1$, part of $\a_3$
from $z_1$ to $z_2$ and part of $\b_1$ from $z_2$ to $x_1$.
According to \cite{os1}, the difference between the $Spin^c$
structures which correspond to $C=(x_1,y_3,z_1)$ and
$E=(w_1,x_2,z_2)$ is measured by the Poincar\'e dual of $p ([\Gamma])$
in $H^2(S^1 \times S^2 ; \bfz) $, where $$p : H_1(\Sigma ; \bfz ) \to
\frac{H_1(\Sigma ; \bfz )}{<[\a_1] ,[\a_2] , [\a_3] ,[\b_1],
[\b_2], [\b_3]
>} \cong H_1 (S^1 \times S^2 ; \bfz) \cong \bfz$$ is the quotient
homomorphism. In Figure~\ref{gamma}, the curve $\Gamma$ is drawn on
the Heegaard surface $\Sigma$ together with $\gamma_i$'s such that
$[\gamma_i]$'s complete $[\a_i]$'s to a basis for the first homology
of $\Sigma$.
\begin{figure}[ht]
  \relabelbox \small {
  \centerline{\epsfbox{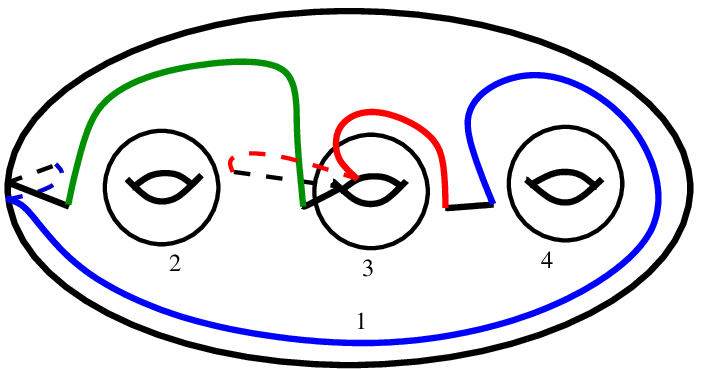}}}
  \relabel{1}{{$\Gamma$}}
  \relabel{2}{{$\gamma_1$}}
  \relabel{3}{{$\gamma_2$}}
 \relabel{4}{{$\gamma_3$}}

  \endrelabelbox
        \caption{The curve $\Gamma$ on the Heegaard surface $\Sigma$}
        \label{gamma}
\end{figure}

\begin{figure}[ht]
  \relabelbox \small {
  \centerline{\epsfbox{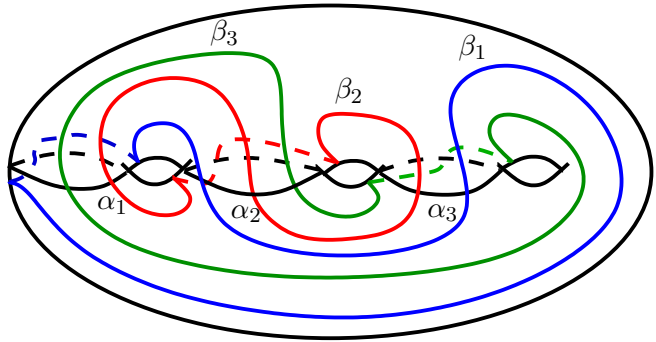}}}
  \relabel{2}{{$\b_1$}}
  \relabel{4}{{$\b_2$}}
  \relabel{6}{{$\b_3$}}
 \relabel{1}{{$\a_1$}}
  \relabel{3}{{$\a_2$}}
  \relabel{5}{{$\a_3$}}

  \endrelabelbox
        \caption{The $\a$ and $\b$ curves on $\Sigma = S_{1/2} \cup
        -S_0$ pictured from the $S_{0}$-side.}
        \label{alphabeta}
\end{figure}

On one hand $[\Gamma] = [\gamma_1] + [\gamma_2] + [\gamma_3] \in
H_1 (\Sigma ; \bfz)$, where each of these curves is oriented ``clockwise".
On the other hand, the kernel of the
quotient epimorphism $p$ is generated by $[\gamma_1]+[\gamma_3]$,
$[\gamma_1]-[\gamma_2]$ and $[\alpha_i]$'s (see Figure~\ref{alphabeta}).
Therefore $p([\Gamma])$ is $\pm 1 \in \bfz \cong H_1 (S^1 \times S^2 ; \bfz)$,
in particular nonzero. This implies that the generators $C$ and
$E$ of the Heegaard Floer chain complex correspond to different
$Spin^c$ structures, i.e. $s \neq s_{\xi}$. Moreover one can see
that the homology induced by the generators $\{A,B,C,D,F,G\}$ is
trivial. Therefore we conclude that $\widehat{HF} (-S^1 \times
S^2)= \widehat{HF} (-S^1 \times S^2, s_\xi)=\mathbb{Z}_2 \oplus
\mathbb{Z}_2$ which is generated by $[X]$ and $[E]$. This confirms
again that the contact class $[X]=c(\xi) \neq 0$.


\subsection{Basis III}

Interestingly there is yet another basis which simplifies the
calculations dramatically. The basis given in
Figure~\ref{opbook2} produces only two generators $X=(x_1,x_2,x_3)$,
and $A=(y_1,x_2,y_3)$. Other than the one which contains the base
point $z$, there are four regions $R_1, \dots , R_4$ indicated in
Figure~\ref{mono2} by their indices and these regions are already
squares. Moreover $\p A = X+X = 0$ by $R_1+R_2$ and $R_3+R_4$, and
$\p X=0$, confirming $\widehat{HF} (-S^1 \times S^2)= \widehat{HF}
(-S^1 \times S^2, s_\xi)=\mathbb{Z}_2 \oplus \mathbb{Z}_2$, the
nontriviality of the contact class $[X]=c(\xi)$ and consequently
the tightness of $\xi$.

\begin{figure}[ht]
  \relabelbox \small {
  \centerline{\epsfbox{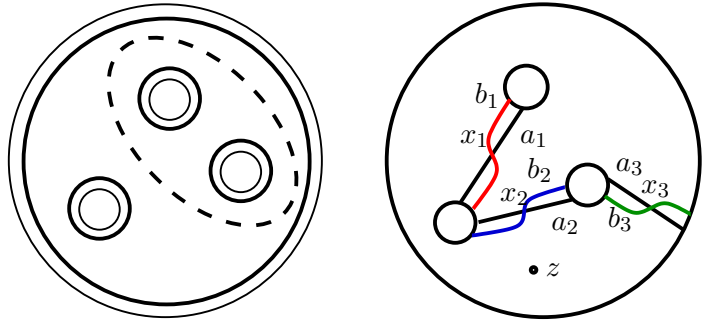}}}
  \relabel{2}{{$a_1$}}
  \relabel{4}{{$a_2$}}
  \relabel{6}{{$a_3$}}
\relabel{1}{{$b_1$}}
  \relabel{3}{{$b_2$}}
  \relabel{5}{{$b_3$}}
  \relabel{7}{{$x_1$}}
  \relabel{8}{{$x_2$}}
  \relabel{9}{{$x_3$}}
\relabel{a}{{$z$}}
  \endrelabelbox
        \caption{Left: Dehn twists about the solid curves are right-handed, while the Dehn twist
        about the dashed curve is left-handed. Right: A basis $\{a_1,a_2,a_3\}$
        on the page $S_{1/2}$, the arcs $\{b_1,b_2, b_3\}$, the intersection points
        $\{x_1, x_2, x_3 \}$, and the base point $z$.}
        \label{opbook2}

\end{figure}

\begin{figure}[ht]
  \relabelbox \small {
  \centerline{\epsfbox{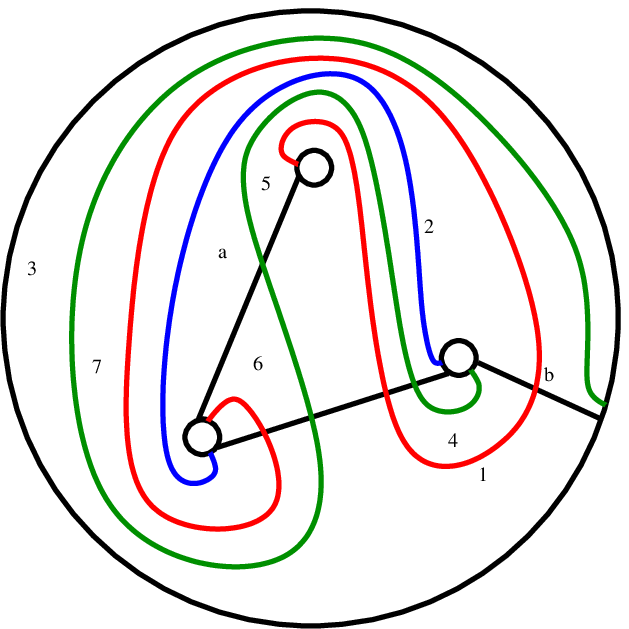}}}
  \relabel{1}{{$\b_1$}}
  \relabel{2}{{$\b_2$}}
  \relabel{3}{{$\b_3$}}
 \relabel{4}{{$1$}}
  \relabel{5}{{$2$}}
  \relabel{6}{{$3$}}
\relabel{7}{{$4$}}
  \relabel{a}{{$y_1$}}
 \relabel{b}{{$y_3$}}

  \endrelabelbox
        \caption{}
        \label{mono2}
\end{figure}


\section{An overtwisted contact structure on $S^3$}

Consider the contact $3$-manifold $(Y, \xi)$ described by the
surgery diagram depicted in Figure~\ref{s3}. When we convert this
diagram into a smooth diagram (see Figure~\ref{sms3}) and blow
down the $-1$-curve, we immediately see that the underlying
$3$-manifold $Y$ is homeomorphic to $S^3$. Note that there is a
unique $Spin^c$ structure on $S^3$. From the contact surgery
diagram we obtain an open book decomposition $OB_{\xi}$ on $S^3$
compatible with $\xi$ shown on the right of Figure~\ref{opbook3}.

\begin{figure}[ht]
  \relabelbox \small {
  \centerline{\epsfbox{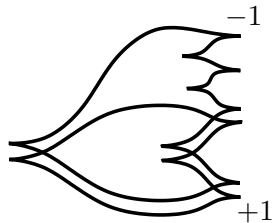}}}
  \relabel{1}{{$-1$}}
  \relabel{2}{{$+1$}}

  \endrelabelbox
        \caption{A contact surgery diagram}
        \label{s3}
\end{figure}

\begin{figure}[ht]
  \relabelbox \small {
  \centerline{\epsfbox{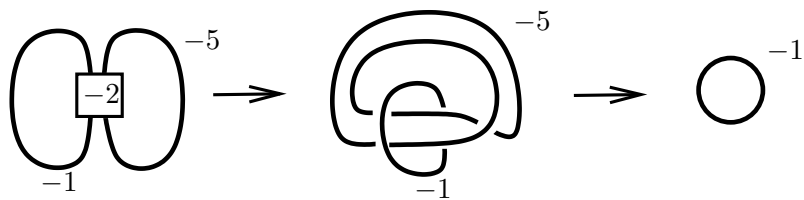}}}
  \relabel{1}{{$-1$}}
  \relabel{2}{{$-5$}}
 \relabel{3}{{$-1$}}
 \relabel{4}{{$-2$}}
 \relabel{5}{{$-1$}}
 \relabel{6}{{$-5$}}

  \endrelabelbox
        \caption{Underlying $3$-manifold is $S^3$}
        \label{sms3}
\end{figure}

\begin{figure}[ht]
  \relabelbox \small {
  \centerline{\epsfbox{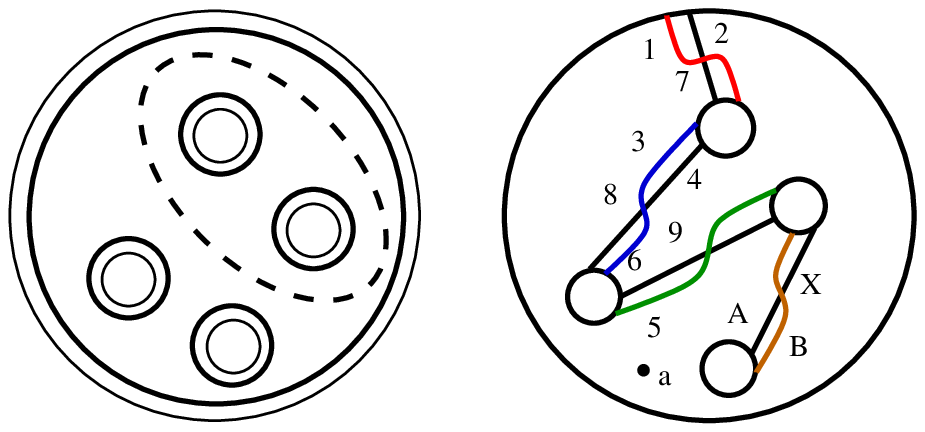}}}
  \relabel{2}{{$a_1$}}
  \relabel{4}{{$a_2$}}
  \relabel{6}{{$a_3$}}
\relabel{1}{{$b_1$}}
  \relabel{3}{{$b_2$}}
  \relabel{5}{{$b_3$}}
  \relabel{7}{{$x_1$}}
  \relabel{8}{{$x_2$}}
  \relabel{9}{{$x_3$}}
\relabel{a}{{$z$}}
\relabel{A}{{$a_4$}}
\relabel{B}{{$b_4$}}
\relabel{X}{{$x_4$}}
  \endrelabelbox
        \caption{Left: Dehn twists about the solid curves are right-handed, while the Dehn twist
        about the dashed curve is left-handed. Right: A basis $\{a_1,a_2,a_3, a_4\}$
        on the page $S_{1/2}$, the arcs $\{b_1,b_2, b_3, b_4\}$, the intersection points
        $\{x_1, x_2, x_3, x_4 \}$, and the base point $z$.}
        \label{opbook3}

\end{figure}

\begin{figure}[ht]
  \relabelbox \small {
  \centerline{\epsfbox{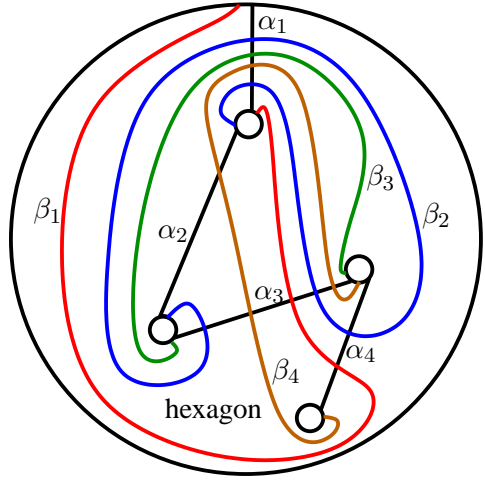}}}
  \relabel{N}{{hexagon}}
\relabel{3}{{$\b_3$}}
  \relabel{2}{{$\b_2$}}
  \relabel{1}{{$\b_1$}}
\relabel{5}{{$\a_1$}}
 \relabel{6}{{$\a_2$}}
 \relabel{7}{{$\a_3$}}
   \relabel{4}{{$\b_4$}}
\relabel{8}{{$\a_4$}}
  \endrelabelbox
        \caption{The bad region is indicated on page $S_0$}
        \label{nonsquare2}

\end{figure}

\begin{figure}[ht]
  \relabelbox \small {
  \centerline{\epsfbox{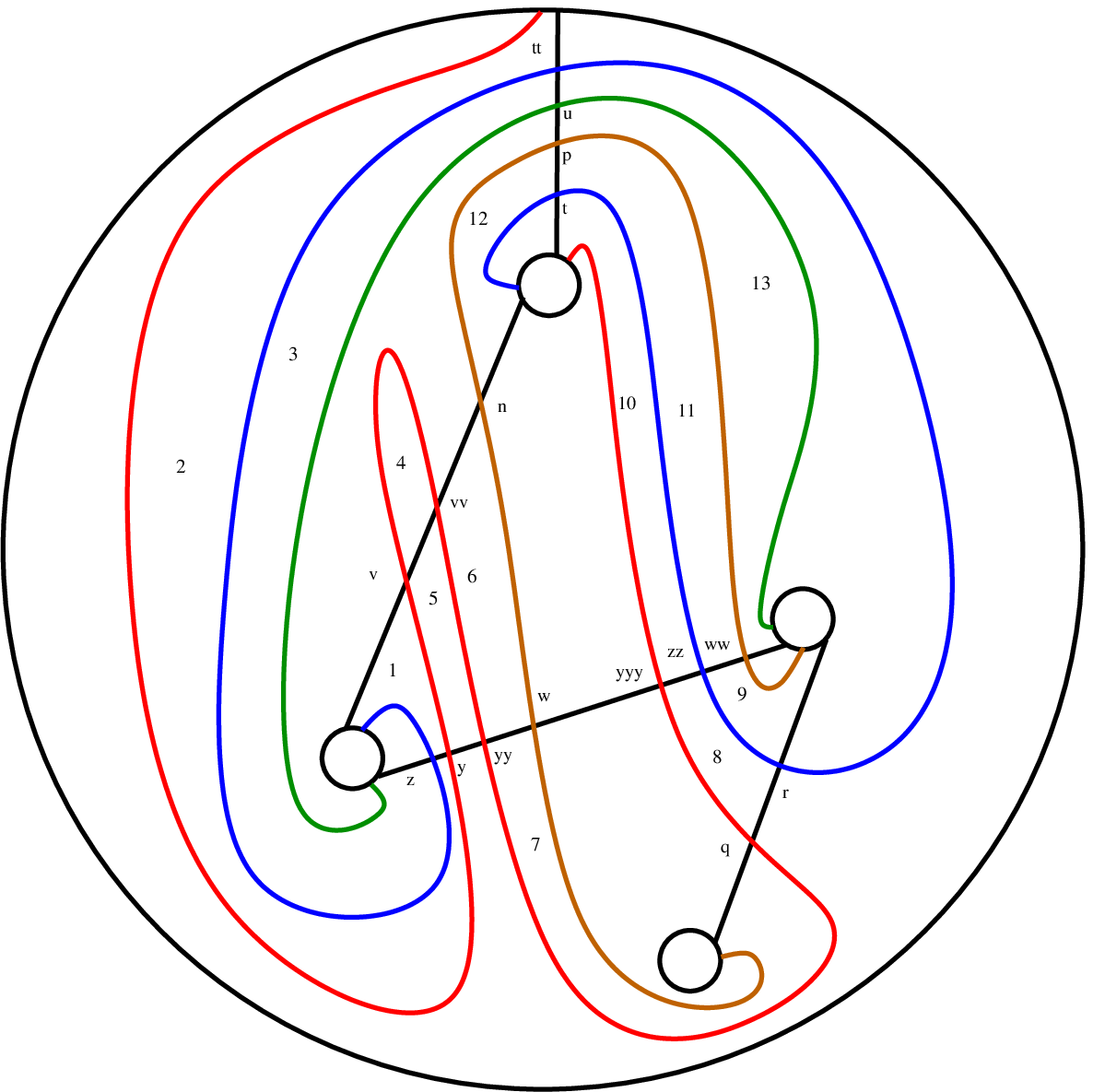}}}
  \relabel{1}{{$1$}}
  \relabel{2}{{$2$}}
  \relabel{3}{{$3$}}
 \relabel{4}{{$4$}}
  \relabel{5}{{$5$}}
  \relabel{6}{{$6$}}
\relabel{7}{{$7$}}
  \relabel{8}{{$8$}}
 \relabel{9}{{$9$}}
 \relabel{10}{{$10$}}
  \relabel{11}{{$11$}}
\relabel{12}{{$12$}}
  \relabel{13}{{$13$}}

 \relabel{y}{{$y_1$}}
  \relabel{yy}{{$y_2$}}
  \relabel{yyy}{{$y_3$}}
 \relabel{z}{{$z_1$}}
  \relabel{zz}{{$z_2$}}
  \relabel{w}{{$w_1$}}
\relabel{ww}{{$w_2$}}
  \relabel{t}{{$t_1$}}
 \relabel{tt}{{$t_2$}}
 \relabel{u}{{$u$}}
  \relabel{p}{{$p$}}
\relabel{q}{{$q$}}
  \relabel{r}{{$r$}}
   \relabel{n}{{$n$}}
  \relabel{v}{{$v_1$}}
\relabel{vv}{{$v_2$}}

  \endrelabelbox
        \caption{Finger move}
        \label{mono3}
\end{figure}

Choosing a basis indicated on the right in Figure~\ref{opbook3}
gives the Heegaard diagram whose $\a$ and $\b$ curves are shown in
Figure~\ref{nonsquare2}. It is possible to convert this Heegaard
diagram into one without any bad region (except for the
region including the base point $z$), by a simple finger move. The
curves of this new Heegaard diagram are depicted in
Figure~\ref{mono3}. There are 13 regions which do not contain $z$.
These regions are denoted by $R_1, \dots , R_{13}$ and
labelled by their indices in
Figure~\ref{mono3}. Examining the intersections of $\a$ and
$\b$ curves on the Heegaard surface $\Sigma$ one can confirm that
the Heegaard Floer chain complex $\widehat{CF}(\Sigma , \b , \a ,
z)$ have 29 generators in total: $X=(x_1,x_2,x_3), A=(x_1,r,x_3,n)$,
$B_{ij}=(v_i,t_j,x_3,x_4), C_{ij}=(v_i,z_j,u,x_4), D_{ij} =
(v_i,r,u,w_j), E_i = (v_i , r, x_3, p)$, $F_k =(y_k, x_2, u, x_4)$,
$G_k= (y_k, r, u, n), H_i = (q,t_i, x_3, n)$, $I_i = (q,x_2,u,w_i),
J=(q,x_2,x_3,p)$, $K_i = (q,z_i, u,n)$, where $1 \leq i,j, \leq 2$
and $1\leq k \leq 3$. One can also calculate all the boundary maps:

$\p X = 0$

$\p A = X$ by $R_{9}+R_{11} + R_{12} $

$\p B_{11} = B_{21}$ by $ R_{4}$

$\p B_{12} = X + B_{22}$ by $ R_{1} + R_{2} $ and $R_{4}$

$\p B_{2j} = 0$

$\p C_{11} = B_{12} + C_{21} + F_1 $ by $R_{3}$, $R_{4}$ and $R_{1}$

$\p C_{12} = B_{11} + C_{22} $ by $ R_{11} + R_{13} $ and $R_{4}$

$\p C_{21} = B_{22} + F_2 $ by $ R_{3} $ and $R_{1}+R_5$

$\p C_{22} = B_{21} $ by $ R_{11}+R_{13}$

$\p D_{11} = D_{21} $ by $ R_{4}$

$\p D_{12} = C_{12} + D_{22} + E_1 $ by $R_{9}$, $R_{4}$ and $R_{13}$

$\p D_{21} = 0 $

$\p D_{22} = C_{22} +E_2 $ by $ R_{9} $ and $R_{13}$

$\p E_1 = B_{11} + E_2 $ by $ R_{9} + R_{11} $ and $R_{4}$

$\p E_2 = B_{21} $ by $ R_9 + R_{11}$

$\p F_1 = X+F_2 $ by $ R_{2} + R_{3} $ and $R_{4} + R_5 $

$\p F_2 = 0$

$\p F_3 = X$ by $ R_{10}+R_{11} + R_{13}$

$\p G_1 = A+D_{11}+F_1 +G_2$ by $R_2+ R_{3}$, $R_{5}+R_6$, $R_9+R_{11}+R_{12}$ and $R_{4}+R_5$

$\p G_2 = D_{21} +F_2 $ by $ R_{6} $ and $R_9+ R_{11}+R_{12}$

$\p G_3 = A + F_3 $ by $R_{10}+ R_{11}+R_{13}$ and $R_9+ R_{11}+R_{12}$

$\p H_1 = A + B_{21} + J $ by $R_8+ R_{10}$, $R_{6}+R_7$ and $R_{12}$

$\p H_2 = B_{22} $ by  $R_{6}+R_7$

$\p I_1 = F_2 $ by $ R_{7}$

$\p I_2 = F_3 + J$ by $ R_{8} + R_{9} $ and $R_{13}$

$\p J = X$ by $R_8+ R_9+ R_{10}+R_{11}$

$\p K_1 = C_{21} + H_2 + I_1 $ by $R_{6}+R_7$, $R_{3}$ and $R_{1}+R_5+R_6$

$\p K_2 = C_{22} + G_3 + H_1 + I_2 $ by $R_{6}+R_7$, $R_{8}$, $R_{11}+R_{13}$ and $R_{11}+R_{12}$

From $\p A = X$ one immediately sees that $c(\xi)= [X] = 0 \in
\widehat{HF}(-S^3, s_{\xi} )$ even with $\bfz$ coefficients.
By an important result in \cite{os}, $\xi$ is not
Stein fillable. In fact, since the unique tight contact structure
on $S^3$ is Stein fillable by \cite{elias}, $\xi$ is overtwisted. On the other
hand, it is seen that the image of the boundary map is 14
dimensional since it is generated by $\{ X,B_{21} , B_{22},
D_{21}, F_2 , A+F_3, A+J, B_{11}+C_{22}, B_{11} + E_2,
B_{12}+C_{21} + F_1, A+D_{11}+F_1+G_2, C_{12}+D_{22}+E_1, C_{21}+
H_2 + I_1, C_{22} + G_3+H_1+I_2 \}$. Therefore the kernel is
$29-14=15$ dimensional. Hence we verified that $\widehat{HF}
(-S^3, s_{\xi} ) = \widehat{HF} (-S^3) = \bfz_2$.

{\Rem Note that this contact structure has an open book
decomposition which differs from the one in the previous section
by an additional puncture and a right-handed Dehn twist around
that puncture. It is interesting that these modifications, even
though the Dehn twist is right-handed, convert a Stein fillable
contact structure to an overtwisted one.}

{\Rem An alternative way to see the overtwistedness of the contact structure $\xi$ given
by the contact surgery diagram in Figure~\ref{s3} is to use the $d_3$ invariant of
$\xi$ as a plane field and compare it with that of the unique tight
contact structure on $S^3$. The former is $1/2$ whereas the latter is $-1/2$. }

\vspace{0.2in}

\noindent{\bf {Acknowledgement}}: We would like to thank Andr\'as Stipsicz
for comments on a draft of this paper. TE was partially supported by a
GEBIP grant of the Turkish Academy of Sciences and a CAREER grant
of the Scientific and Technological Research Council of Turkey. BO
was partially supported by a research grant of the Scientific and
Technological Research Council of Turkey.


\end{document}